\date{}
\def\@citex[#1]#2{\if@filesw\immediate\write\@auxout{\string\citation{#2}}\fi
  \def\@citea{}\@cite{\@for\@citeb:=#2\do
    {\@citea\def\@citea{,\linebreak[0]\hskip0pt plus .2em}%
      \@ifundefined{b@\@citeb}%
    {{\bf ?}\@warning{Citation `\@citeb' on page \thepage\space undefined}}%
      \hbox{\csname b@\@citeb\endcsname}}}{#1}}
\newtheorem{theorem}{Theorem}[section]
\newtheorem{problem}{Problem}[section]
\newtheorem{rule-def}[theorem]{Rule}
\numberwithin{equation}{section}
\begin{document}
\title{Analytical method and its convergence analysis based on homotopy analysis  for the integral form of  doubly singular boundary value problems}
\author{Randhir Singh \thanks{Corresponding author. E-mail:randhir.math@gmail.com}\\
$\rm $\small {Department of Mathematics}\\
\small {Birla Institute of Technology Mesra,
 Ranchi -835215, India}} \maketitle{}
\begin{abstract}
\noindent In this paper, we consider the nonlinear doubly singular  boundary value problems $(p(x)y'(x))'+ q(x)f(x,y(x))=0,~0<x<1$ with Dirichlet/Neumann boundary conditions at $x=0$ and Robin type boundary conditions at $x=1$. Due to the presence of singularity at $x=0$ as well as discontinuity of $q(x)$ at $x=0$, these problems pose difficulties in obtaining their solutions. In this paper, a new formulation of the singular  boundary value problems is presented. To overcome the singular behavior at the origin, with the help of Green's function theory the problem  is transformed into an equivalent Fredholm integral equation. Then  the optimal homotopy analysis method is applied to solve integral form of problem. The optimal control-convergence parameter involved in the components of the series solution is obtained by minimizing the squared residual error equation. For speed up the calculations, the discrete averaged residual error is used to obtain optimal value of the  adjustable parameter $c_0$ to control the convergence of solution. The proposed method  \textbf{(a)} avoids solving a sequence of transcendental equations for the undetermined coefficients \textbf{(b)} it is a general method \textbf{(c)} contains a parameter $c_0$ to control the convergence of solution. Convergence analysis and  error estimate of the proposed method are discussed. Accuracy, applicability and generality of the present method is examined by solving five singular problems.

\end{abstract}
\textbf{Keyword}: Optimal homotopy analysis method; Doubly singular boundary value problems;   Green's function; Lane-Emden  equation;  Fredholm integral equation;  Approximations.

\noindent\textbf{Mathematics Subject Classification (AMC)}:  34B05,   34B15, 34B16, 	34B18, 	 34B27.
\section{Introduction}
We consider the nonlinear doubly singular  boundary value problems (DSBVPs) \cite{Bobisud1990,singh2014approximate,singh2014adomian}
\begin{eqnarray}\label{sec1:eq1}
-(p(x)y'(x))'= q(x)f(x,y(x)),~~~~~~~~0<x<1,
\end{eqnarray}
\begin{align}\label{sec1:eq2}
 y(0)=\delta_1,~~~\alpha_1\; y(1)+\beta_1\; y'(1)=\gamma_1,
\end{align}
or
\begin{align}\label{sec1:eq3}
\displaystyle\lim_{x\rightarrow0}p(x)y'(x)=0~ (\hbox{or}~ y'(0)=0),~~~\alpha_2\; y(1)+\beta_2\; y'(1)=\gamma_2,
\end{align}
where $\delta_1$,  $\alpha_i$, $\beta_i$ and $\gamma_i$ $i=1,2$ are any real constants.
Here, $p(0)=0$ and $q(x)$ may be discontinuous at $x=0$. Throughout this paper, the following conditions are assumed on $p(x)$, $q(x)$ and $f(x,y)$:
\begin{description}
\item $(C_1)$ $p(x)\in C[0,1]\cap C^{1}(0,1]$, $p(x)>0,~q(x)>0\in(0,1]$,
\item $(C_2)$ $ \displaystyle \frac{1}{p(x)}\in L^{1}(0,1]$ and $ \displaystyle \int\limits_{0}^{1}\frac{1}{p(x)}\int\limits_{x}^{1}q(s)ds\; dx<\infty$,  (BCs \eqref{sec1:eq2})
\item $(C_3)$   $q(x)\in L^{1}(0,1]$ and
$\displaystyle  \int\limits_{0}^{1}\frac{1}{p(x)}\int\limits_{0}^{x}q(s) ds\; dx<\infty$,~~ (for BCs \eqref{sec1:eq3})
 \item $(C_4)$ $f(x,y),~f_{y}(x,y)\in C(\Omega)$ and $f_{y}(x,y)\geq 0$ on  $ \Omega$, where $ \Omega:=\{(0,1]\times \mathbb{R}\}$.
\end{description}
 Equation \eqref{sec1:eq1} with  $p(x)=1$,  $q(x)=x^{-\frac{1}{2}}$
 and $f=y^{\frac{3}{2}}$ is  known as the Thomas-Fermi equations \cite{thomas1927calculation,fermi1927metodo}. The Lane-Emden  is a special case of  \eqref{sec1:eq1} with  $p(x)=q(x)=x^{\alpha}$ which has been used to model several phenomena in mathematical physics and astrophysics such as the theory of stellar structure, the thermal behavior of a spherical cloud of gas, isothermal gas spheres and the theory of thermionic currents \cite{wazwaz2017solving}.  The Lane-Emden equation is a basic equation in the theory of stellar structure for a shape factor of $\alpha=2$, i.e.,  signifying spherical bodies. Equation \eqref{sec1:eq1} with $p(x)=q(x)=x^{2}$  arises in oxygen diffusion in a spherical cell \cite{lin1976oxygen,anderson1980complementary} with  $f= \nicefrac{n y}{y+k},~n>0,~k>0,$
and in modelling of heat conduction in human head \cite{flesch1975distribution,gray1980distribution,duggan1986pointwise}
 with $f=\delta e^{-\theta y},~\theta>0,~\delta>0.$
 Existence and uniqueness of doubly singular  boundary value problems \eqref{sec1:eq1} with BCs.  \eqref{sec1:eq2} and \eqref{sec1:eq3} can be found in  \cite{chawla1987existence,dunninger1986existence,Bobisud1990,pandey2009note}. In general, such singular problems are difficult to solve due its singular behavior at $x=0$. There are several techniques to solve doubly singular  boundary value problems \eqref{sec1:eq1} with BCs. \eqref{sec1:eq3} where $p(x)=q(x)=x^{\alpha}$ for $\alpha>0$. The numerical study  of doubly singular  boundary value problems  has been carried out for past couple of decades and still it is an active area of research to develop some better numerical schemes. So far various numerical
methods such as the collocation methods \cite{Reddien1973projection,russell1975numerical}, tangent chord method   \cite{duggan1986pointwise}, finite difference methods \cite{jamet1970convergence,chawla1982finite,chawla1984finite}, spline finite difference methods \cite{iyengar1986spline}, B-Spline method  \cite{kadalbajoo2007b}, spline method \cite{kumar2007higher},  Chebyshev economization method \cite{kanth2003numerical}, Cubic spline method   \cite{kanth2005cubic,kanth2006cubic,kanth2007cubic},  Adomian decomposition method (ADM) and modified ADM \citep{inc2005different,mittal2008solution,khuri2010novel,ebaid2011new,Kumar2010}, ADM with Green's function  \cite{singh2013numerical,singh2014efficient,singh2016efficient}, variational iteration method (VIM) \cite{wazwaz2011comparison,wazwaz2011variational,ravi2010he}, the optimal modified VIM  \cite{singh2017optimal}, homotopy analysis method  \cite{danish2012note} and  homotopy perturbation method \cite{roul2016new} and the references cited therein. Solving   \eqref{sec1:eq1} using ADM or HAM  is always a computationally involved task as it requires computation of unknown coefficients.  In \cite{singh2013numerical,singh2014efficient,singh2016efficient},  the ADMGF was proposed  to overcome the difficulties occurred in the ADM    for  \eqref{sec1:eq1}. However, this method does not provide a mechanism to adjust and control the convergence region and rate of the series solutions.

In this paper, we use the OHAM to obtain approximate solutions of doubly singular problems  \eqref{sec1:eq1}. To overcome the singular behavior at the origin, the singular equation is transformed into an equivalent Fredholm integral equation and  then  the OHAM is applied to get approximate solutions. The most significant feature of the OHAM is the optimal control of the convergence of solutions by a convergence-control parameter  which ensures a very fast convergence. In summary, the OHAM has the following advantages:\vspace{-0.25cm}
\begin{itemize}
\item Unlike HAM, the present approach  does not require any additional computational work for unknown constants;\vspace{-0.2cm}
  \item Independent of small or large physical parameters;\vspace{-0.2cm}
  \item Guarantee of convergence;\vspace{-0.2cm}
  \item Flexibility on choice of base function and initial guess of solution;\vspace{-0.2cm}
  \item Useful analytic tool to investigate highly nonlinear problems with multiple solutions, singularity and perturbed.
\end{itemize}

\section{Description of the method}

\subsection{The equivalent integral form of  \eqref{sec1:eq1} and \eqref{sec1:eq2}}
Let us consider  the  homogeneous version of the problem \eqref{sec1:eq1} with \eqref{sec1:eq2} as
\begin{align}\label{sec2:eq1}
\left.
  \begin{array}{ll}
-(p(x)g(x))'=0,~~~x\in(0,1),\\
g(0)=\delta_1,~~\alpha_1\; g(1)+\beta_1 \;g'(1)=\gamma_1.
\end{array}
\right\}
\end{align}
Its solution is given by
\begin{align}\label{sec2:eq2}
g(x)=\delta_1+\frac{(\gamma_1-\delta_1\alpha_1)}{\mu}h(x),
\end{align}
where $$\mu=\alpha_1h(1)+\beta_1 h'(1),~~~~ h(x)=\displaystyle \int\limits_{0}^{x}\frac{dx}{p(x)},~~
~~ h(1)=\displaystyle\int\limits_{0}^{1}\frac{dx}{p(x)}~~ \hbox{and}~ ~h'(1)=\frac{1}{p(1)}.$$
Integrating  \eqref{sec1:eq1} twice w.r.t $x$ first from $x$ to 1 and then from 0 to $x$ and changing the order of integration, and applying the BCs $ y(0)=0,~\alpha_1 y(1)+\beta_1 y'(1)=0$, we obtain
\begin{align*}
y(x)=-\frac{1}{\mu}\int \limits_{0}^{1}\alpha_1h(x) h(s)q(s)f(s, y(s)) ds+\int \limits_{0}^{x}h(s)q(s)f(s, y(s))ds+\int \limits_{x}^{1}h(x) q(s)f(s, y(s))ds.
\end{align*}
Splitting the first integral into two parts from $0$ to $x$ and $x$ to $1$, we get
\begin{align*}
y(x)&=-\frac{1}{\mu}\int \limits_{0}^{x}\alpha_1h(x) h(s)q(s)f(s, y(s)) ds-\frac{1}{\mu}\int \limits_{x}^{1}\alpha_1h(x) h(s)q(s)f(s, y(s)) ds\\&~~~~+\int \limits_{0}^{x}h(s)q(s)f(s, y(s))ds+\int \limits_{x}^{1}h(x) q(s)f(s, y(s))ds.
\end{align*}
Combining the first and last, and  second and third integrals, we obtain
\begin{align}\label{sec2:eq3}
y(x)=\int\limits_{0}^{x} h(s)\left[1-\frac{\alpha_1h(x)}{\mu}\right] q(s)f(s, y(s))ds+\int\limits_{x}^{1}  h(x)\left[1-\frac{\alpha_1h(s)}{\mu}\right] q(s)f(s, y(s))ds.
\end{align}
Combining  \eqref{sec2:eq2} and   \eqref{sec2:eq3}, we  get  Fredholm integral form of doubly singular boundary value problems  \eqref{sec1:eq1}  and \eqref{sec1:eq2} as
\begin{align}\label{sec2:eq5}
y(x)=g(x) +\int\limits_{0}^{1}G(x,s)q(s) f(s,y(s))ds,
\end{align}
where $g(x)$ and  $G(x,s)$ are given by
 \begin{align}\label{sec2:eq6}
&g(x)=\delta_1+\frac{1}{\mu}(\gamma_1-\delta_1\alpha_1)h(x), \vspace{.2cm}\\
&G(x,s)=\left   \{
  \begin{array}{ll}
 h(x)\bigg[1-\frac{\alpha_1h(s)}{\mu}\bigg], & \hbox{$0\leq x\leq s $},\vspace{.2cm} \\
   h(s)\bigg[1-\frac{\alpha_1h(x)}{\mu}\bigg], & \hbox{$s\leq x\leq 1$.}
\end{array}
\right.
\end{align}
\subsection{The equivalent integral form of \eqref{sec1:eq1} and \eqref{sec1:eq3}}
We again consider the homogeneous  version of the problem \eqref{sec1:eq1} and \eqref{sec1:eq3}  as
\begin{align}\label{sec2:eq12}
\left.
  \begin{array}{ll}
-(p(x)g'(x))'=0,~x\in (0,1),\\
\displaystyle\lim_{x\rightarrow0+}p(x)g'(x)=0,~\alpha_2 g(1)+\beta_2 g'(1)=\gamma_2.
\end{array}
\right\}
\end{align}
The unique solution of \eqref{sec2:eq12} is  given by
\begin{align}\label{sec2:eq13}
g(x)=\frac{\gamma_2}{\alpha_2}.
\end{align}
Integrating  \eqref{sec1:eq1} w.r.to $x$  first from $0$ to $x$ and then from $x$ to $1$, then changing the order of integration, and applying the BCs $\lim_{x\rightarrow0+}p(x)y'(x)=0,~\alpha_2 y(1)+\beta_2 y'(1)=0$, we get
\begin{align*}
y(x)&=\int\limits_{0}^{1}\frac{\beta_2}{\alpha_2p(1)}q(s)f(\xi, y(s))ds+\int \limits_{0}^{1}\bigg[\int
\limits_{s}^{1}\frac{dx}{p(x)}\bigg]q(s)f(s, y(s))ds-\int \limits_{0}^{x}\bigg[\int \limits_{s}^{x}\frac{dx}{p(x)}\bigg]q(s)f(s, y(s))ds.
\end{align*}
Splitting the first  and second integrals into two parts from $0$ to $x$ and $x$ to $1$, we get
\begin{align*}
y(x)&=\int\limits_{0}^{x} \frac{\beta_2}{\alpha_2p(1)}q(s)f(s, y(s))ds+\int\limits_{x}^{1} \frac{\beta_2}{\alpha_2p(1)}q(s)f(s, y(s))ds+\int \limits_{0}^{x}\bigg[\int\limits_{s}^{1}\frac{dx}{p(x)}\bigg]q(s)f(s, y(s))ds\\&~~~~+\int \limits_{x}^{1}\bigg[\int\limits_{s}^{1}\frac{dx}{p(x)}\bigg]q(s)f(s, y(s))ds-\int \limits_{0}^{x}\bigg[\int \limits_{s}^{x}\frac{dx}{p(x)}\bigg]q(s)f(s, y(s))ds,~s>0.
\end{align*}
By combining the integrals of same limits, we obtain
\begin{align}\label{sec2:eq14}
y(x)&=\int \limits_{0}^{x}\bigg[\int
\limits_{s}^{1}\frac{dx}{p(x)}-\int\limits_{s}^{x}\frac{dx}{p(x)} +\frac{\beta_2}{\alpha_2p(1)} \bigg]q(s)f(s, y(s))ds+\int \limits_{x}^{1}\bigg[\int
\limits_{s}^{1}\frac{dx}{p(x)}+\frac{\beta_2}{\alpha_2p(1)} \bigg]q(s)f(s, y(s))ds.
\end{align}
Combining  \eqref{sec2:eq13} and  \eqref{sec2:eq14}, we  get  Fredholm integral form of doubly singular boundary value problem  \eqref{sec1:eq1}  and \eqref{sec1:eq3} as
\begin{align} \label{sec2:eq15}
y(x)=g(x)+\int\limits_{0}^{1} G(x,s)q(s) f(s,y(s))ds.
\end{align}
where $g(x)$ and  $G(x,s)$ are  given by
\begin{align}\label{sec2:eq16}
&g(x)=\frac{\gamma_2}{\alpha_2},\vspace{.2cm}\\
& G(x,s)=\left   \{
  \begin{array}{ll}
   \displaystyle \int\limits_{s}^{1}\frac{dx}{p(x)}+\frac{\beta_2}{\alpha_2\; p(1)}, & \hbox{$0< x \leq s$},\vspace{.2cm} \\
    \displaystyle \int\limits_{s}^{1}\frac{dx}{p(x)}-\int\limits_{s}^{x}\frac{dx}{p(x)} +\frac{\beta_2}{\alpha_2p(1)}, & \hbox{$s\leq x\leq1$.}
\end{array}
\right.
\end{align}
or
\begin{align}
& G(x,s)=\left   \{
  \begin{array}{ll}
   \displaystyle h(1)-h(s)+\frac{\beta_2}{\alpha_2 }h'(1), & \hbox{$0< x \leq s$},\vspace{.2cm} \\
    \displaystyle h(1)-h(x) +\frac{\beta_2}{\alpha_2}h'(1), & \hbox{$s\leq x\leq1$.}
\end{array}
\right.
\end{align}

\subsection{Analytical method based on homotopy analysis}
The integral equations \eqref{sec2:eq5} or \eqref{sec2:eq15} may be
written in the operator equation form
\begin{align}\label{sec2:eq17}
\mathcal{T}[y(x)]=y(x)-g(x)-\int\limits_{0}^{1}G(x,s) q(s) f(s,y(s))ds=0,
\end{align}
where $g(x)$ and $G(x,s)$ are given by \eqref{sec2:eq6} or \eqref{sec2:eq16}, respectively. Basic idea of  homotopy analysis method for solving different scientific models
can be found in  \cite{liao1995approximate,liao2003beyond,liao2007general,liao2009series,abbasbandy2013determination,fan2013optimal} and optimal homotopy asymptotic method in \cite{marinca2008application,hericsanu2010accurate,hashmi2012numerical}. According to homotopy analysis method, using $r\in[0, 1]$ as an embedding parameter, the general zero-order deformation equation is constructed as
\begin{align}\label{sec2:eq18}
(1-q)[\phi(x;r)-y_0(x)]=r\; c_0\;  \mathcal{T}[\phi(x;r)],
\end{align}
where $y_0(x)$ denotes an initial guess for the exact solution $y(x)$,  $c_0\neq0$ is convergence-controller parameter,   $\phi(x;r)$ is an unknown function and $\mathcal{N}[\phi(x;r)]$ is given by
\begin{align}\label{sec2:eq19}
\mathcal{T}[\phi(x;r)]=\phi(x;r)-g(x)-\int\limits_{0}^{1}G(x,s) q(s) f(s,\phi(s;r))ds=0.
\end{align}
When $r = 0$, the zero-order deformation \eqref{sec2:eq18} becomes $\phi(x;0)=y_0(x),$ and
when $r = 1$, it leads to $\mathcal{T}[\phi(x;1)]=0,$  which is exactly the same as the original problem \eqref{sec2:eq17} provided that $\phi(x;1)= y(x)$. Expanding the function $\phi(x;r)$ in a Taylor series with respect to the parameter $r$, we obtain
\begin{align}\label{sec2:eq20}
\phi(x;r)=y_0(x)+\sum_{k=1}^{\infty} y_k(x) r^{k},
\end{align}
where $y_k(x)$ is given by
\begin{align}\label{sec2:eq21}
y_k(x)=\frac{1}{k!}\frac{\partial^k \phi(x;r)}{\partial r^k}\bigg|_{r=0}.
\end{align}
If the convergence controller parameter $c_0\neq0$ is
chosen properly, the series \eqref{sec2:eq20}  converges for $r = 1$ and it  becomes
\begin{align}\label{sec2:eq22}
\phi(x;1)\equiv y(x)=y_0(x)+\sum_{k=1}^{\infty} y_k(x),
\end{align}
which will be one of solutions of the problem \eqref{sec2:eq17}.

Defining the vector $\overrightarrow{y}_k = \{y_0 (x), y_1 (x), \ldots, y_k (x)\}$ and differentiating \eqref{sec2:eq18},  $k$ times with respect to the parameter $r$, dividing it by $k!$, setting subsequently $r = 0,$ we obtain the $k$th-order deformation equation as
\begin{align}\label{sec2:eq23}
y_k(x)-\chi_{k}\; y_{k-1}(x)=c_0\ R_k(\overrightarrow{y}_{k-1},x),
\end{align}
where $\chi_{k}$ is given by
\begin{align}\label{sec2:eq24}
\chi_{k}=\displaystyle\left   \{
  \begin{array}{ll}
     0, & \hbox{$k=0,1$} \\
    1, & \hbox{$k\geq2$}
\end{array}
\right.
\end{align}
and
\begin{align}\label{sec2:eq25}
\nonumber R_k(\overrightarrow{y}_{k-1},x)&=\frac{1}{(k-1)!} \bigg[ \frac{\partial^{k-1}} {\partial r^{k-1}} \mathcal{T}\bigg( \displaystyle \sum_{j=0}^{\infty} y_j r^{j}\bigg)\bigg]\bigg|_{r=0}\\
                  &=y_{k-1}(x)-(1-\chi_{k})g(x)-\int\limits_{0}^{1}G(x,s)\; q(s)\; \mathcal{D}_{k-1}[f(\phi)] \;ds
\end{align}
where $\mathcal{D}_{k-1)}[f(\phi)]$ is the $(k-1)$th-order homotopy-derivative operator \cite{liao2012homotopy} given by
\begin{align}\label{sec2:eq26}
\mathcal{D}_{k-1}[f(\phi)]=\frac{1}{(k-1)!}\frac{\partial^{k-1} }{\partial r^{k-1}} f\bigg(x, \displaystyle \sum_{j=0}^{\infty} y_j r^{j}\bigg)\bigg|_{r=0}.
\end{align}
Using \eqref{sec2:eq23} and  \eqref{sec2:eq25}, the $k$th-order deformation equation is simplified as
\begin{align}\label{sec2:eq27}
y_k(x)-\chi_{k} y_{k-1}(x)=c_0\ \bigg[y_{k-1}(x)-(1-\chi_{k})g(x)-\int\limits_{0}^{1}G(x,s) q(s)  \mathcal{D}_{k-1}[f(\phi)] ds\bigg].
\end{align}
Using  \eqref{sec2:eq27} with an initial guess $y_0(x)=g(x)$, the solution components $y_k(x)$ are obtained  as
\begin{align}\label{sec2:eq27a}
\left.
  \begin{array}{ll}
 y_1(x)&=\displaystyle c_0\ \bigg\{y_{0}(x)-g(x)-\int\limits_{0}^{1}G(x,s) q(s)  \mathcal{D}_{0}[f(\phi)] ds \bigg\}\\
  y_2(x)&=\displaystyle(1+c_0)\ y_{1}(x)-c_0\bigg\{\int\limits_{0}^{1}G(x,s) q(s)  \mathcal{D}_{1}[f(\phi)] ds\bigg\}\\
 \vdots\\
 y_k(x)&=\displaystyle(1+c_0)\ y_{k-1}(x)-c_0\bigg\{\int\limits_{0}^{1}G(x,s) q(s)  \mathcal{D}_{k-1}[f(\phi)] ds\bigg\}~~~ k\geq3
 \end{array}
\right\}
\end{align}
The $M$th-order approximate solution of the problem \eqref{sec2:eq17} is defined as
\begin{align}\label{sec2:eq28}
\phi_M(x,c_0)=y_0(x)+\sum_{k=1}^{M} y_k(x,c_0).
\end{align}
Appropriate selection of the convergence control parameter $c_0$ has a big influence
on the convergence region of series \eqref{sec2:eq22} and on the convergence rate as well \cite{liao2012homotopy,odibat2010study}. One of the methods for selecting the value of convergence control parameter is the so-called $c_0$-curve and  the horizontal line may be considered  as the
valid interval for $c_0$ \cite{liao2003beyond,hetmaniok2014usage}. This method enables to determine the effective region of the convergence control parameter,
however it does not give the possibility to determine the value ensuring the fastest convergence \cite{liao2012homotopy}.  Another way to find the optimal value of the convergence-control parameter $c_0$ is obtained by minimizing the squared residual of governing equation
\begin{align}\label{sec2:eq29}
E_{M}(c_0)=  \int_{0}^{1}  \bigg(\mathcal{T}[\phi_M(x,c_0)]\bigg)^2  dx.
\end{align}
The squared residual error defined by \eqref{sec2:eq29} is a kind of measurement of the accuracy of the $M$th-order
approximation. However, the exact squared residual error is expensive to calculate when $M$ is large. For speed up the calculations Liao \cite{liao2010optimal,liao2012homotopy} suggested to replace the integral in formula
\eqref{sec2:eq29} by its approximate value obtained by applying the quadrature rules. So, we
approximate $E_M$ by  the discrete averaged residual error defined by
\begin{align}\label{sec2:eq30}
E_{M}(c_0)\approx \frac{1}{n}\sum_{j=1}^{n} \bigg(\mathcal{T}[\phi_M(x_j,c_0)]\bigg)^2 ,
\end{align}
where $0=x_1<x_2<\ldots x_{j-1}<x_{j}<\ldots <x_n=1$ with nodal points $x_j = jh,$ $h=x_j-x_{j-1}$, ~ $j =1,2,\ldots,n$.  Since $E_{M}(c_0)$ dependent upon $c_0$, the optimal value is obtained by solving $\nicefrac{dE_{M}}{dc_0}=0$, the effective region of the convergence control parameter is usually defined as $R_{c_{0}}=\{c_0: \lim_{M\rightarrow \infty}E_{M}(c_0)=0\}$ and optimal value will satisfy $E_{M}(\hat{c}_{0})<E_{M}(c_0)$. Having computed the optimal value $\hat{c}_0$ and substituting in  \eqref{sec2:eq28}, the approximate solution will be obtained.

\section{Convergence analysis}
In this section, we establish the convergence of method defined in \eqref{sec2:eq27a} the
solution of equivalent integral form \eqref{sec2:eq17} of doubly singular boundary value problems  \eqref{sec1:eq1} -\eqref{sec1:eq3}. Let $\mathds{X}= \big(C[0,1], \|y\|\big)$ be a Banach space with $\|y\|=\max_{ x\in [0,1]} |y(x)|,~y\in \mathds{X}.$

\begin{theorem}\label{sec3:eq1}
Let $0 < \delta< 1$ and the solution components $y_0(x),y_1(x),y_2(x),\ldots$ obtained by \eqref{sec2:eq27a} satisfy the following condition:
 \begin{align}\label{sec3:eq2}
\exists~~ k_0 \in \mathbb{N}~~\forall~ k\geq k_0:~ \|y_{k+1}\|\leq \delta\|y_{k}\|,
 \end{align}
then the series solution $\sum_{k=0}^{\infty} y_k(x)$ is convergent.
\end{theorem}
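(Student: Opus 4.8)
The plan is to show that the sequence of partial sums $S_n(x) = \sum_{k=0}^{n} y_k(x)$ is a Cauchy sequence in the Banach space $\mathds{X} = \big(C[0,1], \|\cdot\|\big)$; since $\mathds{X}$ is complete, this immediately yields convergence of the series. The only quantitative input is the contraction-type hypothesis \eqref{sec3:eq2}, which beyond the index $k_0$ forces the norms $\|y_k\|$ to decay geometrically, so that the tails of the series are dominated by a convergent geometric series.

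First I would iterate the hypothesis. For every $m \geq 0$, applying \eqref{sec3:eq2} repeatedly gives
\[
\|y_{k_0 + m}\| \leq \delta \|y_{k_0 + m - 1}\| \leq \cdots \leq \delta^{m} \|y_{k_0}\|,
\]
so the terms past $k_0$ are bounded by a geometric sequence with ratio $\delta \in (0,1)$. Next, for any $n > m \geq k_0$ I would estimate the difference of partial sums using the triangle inequality followed by this geometric bound:
\[
\|S_n - S_m\| = \Big\| \sum_{k=m+1}^{n} y_k \Big\| \leq \sum_{k=m+1}^{n} \|y_k\| \leq \|y_{k_0}\| \sum_{k=m+1}^{n} \delta^{\,k-k_0} \leq \frac{\|y_{k_0}\|}{1-\delta}\, \delta^{\,m+1-k_0}.
\]
Because $0 < \delta < 1$, the right-hand side tends to $0$ as $m \to \infty$, and it does not depend on $n$; hence $\{S_n\}$ satisfies the Cauchy criterion. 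Completeness of $\mathds{X}$ then guarantees a limit $y \in C[0,1]$, which is precisely the statement that $\sum_{k=0}^{\infty} y_k(x)$ converges in norm.

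I do not anticipate a genuine obstacle here: the argument is a norm-wise comparison against a geometric series, and the essential structural ingredient, completeness of $C[0,1]$ under the supremum norm, has already been recorded. The only point requiring mild care is the index bookkeeping in the geometric estimate, namely invoking \eqref{sec3:eq2} only for $k \geq k_0$ and observing that the first $k_0$ terms, being finite in number, contribute a fixed finite quantity that plays no role in the Cauchy tail as $m \to \infty$.
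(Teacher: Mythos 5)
Your argument is correct and is essentially the paper's own proof: both establish that the partial sums form a Cauchy sequence in $\mathds{X}$ by iterating the hypothesis \eqref{sec3:eq2} to get the geometric decay $\|y_k\|\leq\delta^{k-k_0}\|y_{k_0}\|$ and then bounding the tail by $\frac{\delta^{m+1-k_0}}{1-\delta}\|y_{k_0}\|$. The only cosmetic difference is that the paper writes $\phi_n-\phi_m$ as a telescoping sum of consecutive differences before applying the triangle inequality, whereas you bound $\sum_{k=m+1}^{n}\|y_k\|$ directly; the resulting estimate is the same.
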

\begin{proof}
Define the sequence $\{\phi_n\}_{n=0}^{\infty}$ as,
\begin{align}\label{sec3:eq3}
\left\{
  \begin{array}{ll}
\phi_0=y_0(x)\\
\phi_1=y_0(x)+y_1(x)\\
\phi_2=y_0(x)+y_1(x)+y_2(x)\\
\vdots\\
\phi_n=y_0(x)+y_1(x)+y_2(x)+ \cdots +y_n(x)\\
\end{array} \right.
\end{align}
and we show that is a Cauchy sequence in the Banach space  $\mathds{X}$. For this purpose, consider
\begin{align*}
\|\phi_{n+1}-\phi_{n}\|&=\|y_{n+1}\|\leq \delta \|y_{n}\|\leq  \delta^2 \|y_{n-1}\|\leq \ldots \leq \delta^{n-k_0+1} \|y_{k_{0}}\|.
\end{align*}
For every $n,m\in \mathbb{N}$, $n\geq m >k_0$, we have
\begin{align}\label{sec3:eq4}
\nonumber \|\phi_{n}-\phi_{m}\|&=\|(\phi_n-\phi_{n-1})+(\phi_{n-1}-\phi_{n-2})+\cdots+(\phi_{m+1}-\phi_{m})\|\\
\nonumber&\leq \|\phi_n-\phi_{n-1}\|+\|\phi_{n-1}-\phi_{n-2}\|+\cdots+\|\phi_{m+1}-\phi_{m}\|\\
\nonumber &\leq   (\delta^{n-k_0} + \delta^{n-k_0-1} +\cdots+\delta^{m-k_0+1})\|y_{k_{0}}\|\\
&= \frac{1-\delta^{n-m}}{1-\delta}\delta^{m-k_{0}+1}   \|y_{k_{0}}\|
\end{align}
and since $0<\delta<1$ so it follows that
\begin{eqnarray}\label{sec4:eq5}
\lim_{n,m\rightarrow \infty} \|\phi_{n}-\phi_{m}\|=0.
\end{eqnarray}
Therefore, $\{\phi_n\}_{n=0}^{\infty}$ is a Cauchy sequence in the Banach space  $\mathds{X}$ and it implies that the series solution defined in \eqref{sec2:eq28}
converges.  This completes the proof of Theorem \ref{sec3:eq1}.
\end{proof}

\begin{theorem}\label{sec3:eq6}
Assume that the series solution $\sum_{k=0}^{\infty} y_k(x)$ defined in  \eqref{sec2:eq28}, is convergent to the solution $y(x)$. If the truncated
series $\phi_M(x,c_0)=\sum_{m=0}^{M} y_m(x,c_0)$ is used as an approximation to the solution $y(x)$ of the problem \eqref{sec2:eq17}, then the maximum
absolute truncated error is estimated as
\begin{align}
\left|y(x)-\phi_M(x,c_0)\right|\leq\frac{1}{1-\delta}\delta^{M-k_{0}+1}   \|y_{k_{0}}\|.
\end{align}
\end{theorem}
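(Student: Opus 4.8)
The plan is to derive this bound directly from the geometric Cauchy estimate \eqref{sec3:eq4} already established in the proof of Theorem \ref{sec3:eq1}, rather than re-running the telescoping argument. Recall that for all $n \geq m > k_0$ we have
\[
\|\phi_n - \phi_m\| \leq \frac{1-\delta^{\,n-m}}{1-\delta}\,\delta^{\,m-k_0+1}\,\|y_{k_0}\|,
\]
so the entire argument reduces to specializing $m = M$ and passing to the limit $n \to \infty$. The idea is that $\phi_M$ is a partial sum and $y$ is the full limit, so the truncation error is exactly the norm distance between $\phi_M$ and $\lim_{n\to\infty}\phi_n$.

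First I would fix $M \geq k_0$ and set $m = M$ in \eqref{sec3:eq4}, obtaining
\[
\|\phi_n - \phi_M\| \leq \frac{1-\delta^{\,n-M}}{1-\delta}\,\delta^{\,M-k_0+1}\,\|y_{k_0}\|
\]
for every $n \geq M$. By the hypothesis of this theorem the series $\sum_{k=0}^{\infty} y_k$ converges to $y$, which is precisely the statement that $\phi_n \to y$ in the norm of $\mathds{X}$; since the norm is continuous, the left-hand side converges to $\|y - \phi_M\|$ as $n \to \infty$. On the right-hand side, because $0 < \delta < 1$ we have $\delta^{\,n-M} \to 0$, so the factor $1-\delta^{\,n-M} \to 1$. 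Letting $n \to \infty$ therefore yields
\[
\|y - \phi_M\| \leq \frac{1}{1-\delta}\,\delta^{\,M-k_0+1}\,\|y_{k_0}\|.
\]

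Finally I would convert this norm bound into the stated pointwise estimate: since $\|y - \phi_M\| = \max_{x\in[0,1]}|y(x) - \phi_M(x,c_0)|$, every fixed $x \in [0,1]$ satisfies $|y(x) - \phi_M(x,c_0)| \leq \|y - \phi_M\|$, which is exactly the claimed inequality. I do not anticipate a genuine obstacle here, since the convergence hypothesis supplies the limit $\phi_n \to y$ for free; the only point requiring care is the interchange of the limit with the norm, and that is immediate from continuity of $\|\cdot\|$. In effect this is the standard "geometric tail" error bound, specialized to the partial sums $\phi_n$ constructed in \eqref{sec3:eq3}.
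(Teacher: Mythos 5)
Your proposal is correct and follows exactly the same route as the paper's own proof: specialize the Cauchy estimate \eqref{sec3:eq4} to $m=M$, let $n\to\infty$ using $\phi_n\to y$ and $\delta^{\,n-M}\to 0$, and read off the pointwise bound from the sup-norm. The only addition you make is to spell out the (trivial) continuity of the norm and the passage from the norm bound to the pointwise inequality, which the paper leaves implicit.
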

\begin{proof}
From Theorem \ref{sec3:eq1}, following inequality \eqref{sec3:eq4}, we have
\begin{align*}
\|\phi_{n}-\phi_M(x,c_0)\|\leq\frac{1-\delta^{n-M}}{1-\delta}\delta^{M-k_{0}+1}   \|y_{k_{0}}\|,
\end{align*}
for $n\geq M$. Now, as $n\rightarrow \infty$ then   $\phi_{n}\rightarrow y$ and $\delta^{n-M}\rightarrow0$. So,
\begin{align}\label{sec4:eq7}
\|y(x)-\phi_M(x,c_0)\|\leq\frac{1}{1-\delta}\delta^{M-k_{0}+1}   \|y_{k_{0}}\|.
\end{align}
Theorems \ref{sec3:eq1} and \ref{sec3:eq6} together confirm that the convergence of series solution \eqref{sec2:eq28}.
\end{proof}

Now, we discuss about the uniqueness of the solution of  problem \eqref{sec2:eq17}. The operator equation  form of \eqref{sec2:eq17} is written as
\begin{align}\label{sec3:eq8}
y(x)=g(x)+\int\limits_{0}^{1} G(x,s)q(s)f(s,y(s))ds.
\end{align}
\begin{theorem}\label{sec3:eq9}
Let $0 <\delta< 1$ and suppose there exists a constant $l>0$ such that
\begin{align}\label{sec3:eq10}
|f(x,z_1)-f(x,z_2)|\leq L |z_1-z_2|~~ \forall~ (x,z_1), (x,z_2)\in \Omega.
\end{align}
Then there exists one, and only one, solution $y(x)$ of  equation \eqref{sec3:eq8} in $\mathds{X}$.
\end{theorem}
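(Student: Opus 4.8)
The plan is to recast \eqref{sec3:eq8} as a fixed-point problem and invoke the Banach contraction mapping principle. First I would introduce the integral operator $\mathcal{A}:\mathds{X}\to\mathds{X}$ defined by
\[
\mathcal{A}[y](x)=g(x)+\int\limits_{0}^{1} G(x,s)q(s)f(s,y(s))\,ds,
\]
so that a solution of \eqref{sec3:eq8} is precisely a fixed point of $\mathcal{A}$. Before anything else I would verify that $\mathcal{A}$ is well defined on $\mathds{X}=C[0,1]$: under $(C_1)$ the kernel $G(x,s)$ is bounded and continuous in $x$, while the integrability requirements in $(C_2)$ and $(C_3)$ ensure that $\int_{0}^{1}|G(x,s)|q(s)\,ds$ is finite and, in fact, uniformly bounded in $x$. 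Combined with the continuity of $f$ supplied by $(C_4)$, this shows that $\mathcal{A}[y]\in C[0,1]$ whenever $y\in C[0,1]$, so $\mathcal{A}$ indeed maps the Banach space into itself.

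The core step is the contraction estimate. For arbitrary $y_1,y_2\in\mathds{X}$ I would write
\[
|\mathcal{A}[y_1](x)-\mathcal{A}[y_2](x)|\leq\int\limits_{0}^{1}|G(x,s)|\,q(s)\,|f(s,y_1(s))-f(s,y_2(s))|\,ds,
\]
then apply the Lipschitz hypothesis \eqref{sec3:eq10} to bound the last factor by $L\,|y_1(s)-y_2(s)|\leq L\,\|y_1-y_2\|$, and pull the norm outside the integral. Taking the supremum over $x\in[0,1]$ yields
\[
\|\mathcal{A}[y_1]-\mathcal{A}[y_2]\|\leq\left(L\,\max_{x\in[0,1]}\int\limits_{0}^{1}|G(x,s)|\,q(s)\,ds\right)\|y_1-y_2\|.
\]
Identifying the bracketed constant with $\delta$ (which is the natural reading of the hypothesis $0<\delta<1$), the operator $\mathcal{A}$ becomes a contraction on the complete space $\mathds{X}$, and the Banach fixed-point theorem then delivers a unique fixed point, that is, one and only one solution of \eqref{sec3:eq8}.

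The main obstacle I anticipate is pinning down the constant $\delta$ and making it genuinely less than one. The statement posits $0<\delta<1$ as a hypothesis but leaves its dependence on $L$ and the problem data implicit; the proof must either set $\delta:=L\,\max_{x}\int_{0}^{1}|G(x,s)|q(s)\,ds$ and assume this quantity is below $1$, or first verify that $\max_{x}\int_{0}^{1}|G(x,s)|q(s)\,ds$ is finite so that a sufficiently small $L$ produces a contraction. Establishing this uniform bound is exactly where conditions $(C_2)$ and $(C_3)$ carry the weight, since $G$ inherits the singular behaviour of $1/p$ through $h(x)=\int_{0}^{x}dx/p(x)$ and $q$ may be unbounded near the origin; one has to confirm that the finiteness of the double integrals in $(C_2)$ and $(C_3)$ translates into boundedness of $\int_{0}^{1}|G(x,s)|q(s)\,ds$ uniformly in $x$, which is the delicate point that ties the contraction argument back to the admissibility assumptions on $p$ and $q$.
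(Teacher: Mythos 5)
Your proposal is correct, and its central estimate --- bounding $\|\mathcal{A}[y_1]-\mathcal{A}[y_2]\|$ by $L\,M\,\|y_1-y_2\|$ with $M=\max_{x\in[0,1]}\int_0^1|G(x,s)|q(s)\,ds$ and reading the hypothesis $0<\delta<1$ as $\delta=LM<1$ --- is exactly the estimate the paper uses. The difference is in what is done with it. The paper does not set up a fixed-point operator at all: it assumes two solutions $z_1,z_2\in\mathds{X}$ already exist, subtracts the two integral equations, applies the same Lipschitz-plus-kernel bound to get $\|z_1-z_2\|\le\delta\|z_1-z_2\|$, and concludes $z_1=z_2$. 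That argument establishes only uniqueness; the existence half of the claim ``one, and only one, solution'' is never actually proved in the paper (it is implicitly deferred to the convergence theorems). Your route through the Banach contraction mapping principle buys you both halves at once, and it forces you to check something the paper silently skips, namely that $\mathcal{A}$ maps $C[0,1]$ into itself, which is where conditions $(C_1)$--$(C_4)$ and the finiteness of $M$ genuinely enter. You are also right to flag that $\delta$ is undefined in the theorem statement and must be identified with $LM$ for the hypothesis to have content; the paper makes the same identification only inside the proof. In short, your version is a strictly stronger and more complete argument built on the same inequality.
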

\begin{proof}
Let us assume that there are two solutions $z_1(x)$, $z_1(x)\in \mathds{X}$ of the problem \eqref{sec3:eq8}
 \begin{align*}
\|z_1-z_2\|&=\max_{ x \in [0,1]}|z_1(x)-z_2(x)|=\max_{ x \in [0,1]} \bigg|\int\limits_{0}^{1} G(x,s)q(s)\big( f(s,z_1(s))- f(s,z_2(s))\big) ds \bigg|\\
&\leq   \max_{ s \in [0,1]}\left| f(s,z_1(s))- f(s,z_2(s))\right| \bigg( \max_{ x \in [0,1]} \int\limits_{0}^{1} |G(x,s)q(s)ds|\bigg)
\end{align*}
using \eqref{sec3:eq10},  the above inequality reduce to
 \begin{align*}
\|z_1-z_2\|&\leq  LM \max_{ s\in [0,1]} |z_1(s)-z_2(s)|=\delta\|z_1-z_2\|,
\end{align*}
setting $\delta=LM$ and $M:=\max_{ x\in [0,1]}\int\limits_{0}^{1}|G(x,s)q(s)ds|$, we obtain
 \begin{align*}
\|z_1-z_2\|\leq \delta\|z_1-z_2\|,
\end{align*}
since $0<\delta<1$, the equality $z_1=z_2$ must hold. This means  equation \eqref{sec3:eq8} has a unique solution in $\mathds{X}$.
\end{proof}
In the following theorem we show that the series  defined in \eqref{sec2:eq28} is convergent, where the solution components  $y_k(x)$ are obtained from \eqref{sec2:eq27a}, then it must be a solution of the integral of \eqref{sec3:eq8}.

\begin{theorem}\label{sec3:eq11}
Assume that the series solution $\sum_{k=0}^{\infty} y_k(x)$ defined in  \eqref{sec2:eq28} , is convergent to the solution $y(x)$
then it must be a solution of the integral of \eqref{sec3:eq8}.
\end{theorem}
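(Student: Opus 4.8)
The plan is to exploit the telescoping structure of the $k$th-order deformation equations \eqref{sec2:eq27} together with the defining property of the homotopy-derivative operators $\mathcal{D}_m[f(\phi)]$. First I would record the elementary consequence of the hypothesis: since $\sum_{k=0}^{\infty} y_k(x)$ converges in $\mathds{X}$, the general term must vanish, i.e. $\lim_{n\to\infty}\|y_n\|=0$, and the partial sums $\phi_n=\sum_{j=0}^{n} y_j$ satisfy $\phi_n\to y$ in $\mathds{X}$.

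Next I would sum the deformation equation \eqref{sec2:eq27}, written as $y_k(x)-\chi_k y_{k-1}(x)=c_0\, R_k(\overrightarrow{y}_{k-1},x)$, over $k=1,\dots,n$. Using $\chi_1=0$ and $\chi_k=1$ for $k\ge 2$, the left-hand side telescopes to $y_n(x)$; on the right-hand side the factor $(1-\chi_k)$ retains only the single term $g(x)$, while $\sum_{k=1}^{n} y_{k-1}=\phi_{n-1}$ and the integrals combine to give $\int_0^1 G(x,s)q(s)\sum_{m=0}^{n-1}\mathcal{D}_m[f(\phi)]\,ds$. This yields the identity
$$y_n(x)=c_0\Big[\phi_{n-1}(x)-g(x)-\int_0^1 G(x,s)\,q(s)\sum_{m=0}^{n-1}\mathcal{D}_m[f(\phi)]\,ds\Big].$$

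The crucial step is to identify the limit of the inner sum. By the definition \eqref{sec2:eq26} of $\mathcal{D}_m$ as the $m$th Taylor coefficient of $f\big(x,\phi(x;r)\big)=f\big(x,\sum_j y_j r^j\big)$ in $r$, evaluating that Taylor series at $r=1$ gives $\sum_{m=0}^{\infty}\mathcal{D}_m[f(\phi)]=f\big(x,\sum_j y_j\big)=f(x,y(x))$. I would then let $n\to\infty$ in the displayed identity: the left side tends to $0$, $\phi_{n-1}\to y$, and the integral term tends to $\int_0^1 G(x,s)q(s)f(s,y(s))\,ds$. Since $c_0\neq 0$, dividing by $c_0$ leaves exactly $y(x)=g(x)+\int_0^1 G(x,s)q(s)f(s,y(s))\,ds$, which is \eqref{sec3:eq8}, as required.

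The main obstacle I anticipate is the rigorous passage to the limit inside the integral, namely justifying $\int_0^1 G q\sum_{m=0}^{n-1}\mathcal{D}_m\,ds\to\int_0^1 G q\, f(s,y)\,ds$. This is where I would invoke the standing hypotheses: the Lipschitz bound \eqref{sec3:eq10} on $f$ controls $|f(s,\phi_{n-1}(s))-f(s,y(s))|\le L\,\|\phi_{n-1}-y\|$, and the finiteness of $M=\max_{x\in[0,1]}\int_0^1|G(x,s)q(s)|\,ds$ established in the proof of Theorem \ref{sec3:eq9} supplies an integrable dominating factor, so that dominated convergence (or a direct uniform estimate) applies. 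One must also confirm that the partial Taylor sums $\sum_{m=0}^{n-1}\mathcal{D}_m[f(\phi)]$ converge to $f(s,y(s))$ strongly enough to be integrated against $G\,q$; under the continuity assumption $(C_4)$ together with $\phi_n\to y$ in $\mathds{X}$, this reduces precisely to the continuity and Lipschitz estimate just noted.
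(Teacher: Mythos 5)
Your proposal is correct and follows essentially the same route as the paper's proof: telescope the $k$th-order deformation equations, use $\lim_{n\to\infty} y_n=0$ and $c_0\neq 0$, and identify $\sum_{m}\mathcal{D}_{m}[f(\phi)]$ with $f(x,y)$ to recover the integral equation \eqref{sec3:eq8}. The only real difference is that you sum finitely and pass to the limit where the paper sums to infinity outright and delegates the convergence of $\sum_{m}\mathcal{D}_{m}[f(\phi)]$ to $f(x,y)$ to a citation of Cherruault --- precisely the step you flag as the main analytic obstacle.
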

\begin{proof}
Since  $\sum_{k=0}^{\infty} y_k(x)$  is convergent, then
\begin{align}\label{sec3:eq12}
\lim_{n\rightarrow\infty} y_n(x)=0,~~~~ \forall~~x\in[0,1].
\end{align}
By summing up the left hand-side of  \eqref{sec2:eq23}, we get
\begin{align}\label{sec3:eq13}
\sum_{k=1}^{n}[y_k(x)-\chi_{k} y_{k-1}(x)]=y_1(x)+\ldots+(y_n(x)-y_{n-1}(x))=y_n(x).
\end{align}
Letting $n\rightarrow\infty$ and using \eqref{sec3:eq12}, equation \eqref{sec2:eq13} reduces to
\begin{align}\label{sec2:eq14}
\sum_{k=1}^{\infty}[y_k(x)-\chi_{k} y_{k-1}(x)]= \lim_{n\rightarrow\infty} y_n(x)=0.
\end{align}
Using \eqref{sec2:eq14}  and right hand-side of the relation   \eqref{sec2:eq23},  we obtain
\begin{align}\label{sec2:eq15}
\sum_{k=1}^{\infty} c_0\ R_k(\overrightarrow{y}_{k-1},x)=\sum_{k=1}^{\infty}[y_k(x)-\chi_{k} y_{k-1}(x)]=0.
\end{align}
Since $c_0\neq 0$, then equation \eqref{sec2:eq15} reduces to
\begin{align}\label{sec2:eq16}
\sum_{k=1}^{\infty} \ R_k(\overrightarrow{y}_{k-1},x)=0.
\end{align}
 Using \eqref{sec2:eq16} and \eqref{sec2:eq25}, we have
\begin{align*}
0=\sum_{k=1}^{\infty} \ R_k(\overrightarrow{y}_{k-1},x)&=\sum_{m=1}^{\infty} \bigg[y_{k-1}(x)-(1-\chi_{k})g(x)-\int\limits_{0}^{1}G(x,s)\;q(s)\;  \mathcal{D}_{k-1}[f(s,\phi)] ds\bigg]\\
 &=  \sum_{k=1}^{\infty} y_{k-1}(x)-g(x)-\int\limits_{0}^{1}G(x,s)\; q(s)\;  \sum_{k=1}^{\infty} \mathcal{D}_{k-1}[f(s,\phi)]  ds,
  \end{align*}
since $\sum_{k=0}^{\infty} y_k(x)$  converges to $y(x)$, then  $\sum_{k=0}^{\infty} \mathcal{D}_{k-1}[f(x,\phi)]$  converges to $f(x,y)$ \cite{cherruault1989convergence},
\begin{align*}
y(x)=g(x)+\int\limits_{0}^{1}G(x,s) q(s)  f(s,y(s))  ds.
\end{align*}
Hence, $y(x)$ is the exact solution of  integral equation \eqref{sec3:eq8}.
\end{proof}

\section{Numerical results}

To examine the accuracy and applicability of the OHAM, we consider five  examples of singular boundary value problem.  All of the computations have been performed using MATHEMATICA.  
Here,  $y(x)$,  $\phi_{M}(x)$ and $\psi_{M}(x)= \phi_{M}(x,-1)$ denote   the exact, OHAM, and ADMGF solutions, respectively.

\begin{problem}\label{prob1}
\rm{\textbf{Doubly Singular Boundary Value Problem} \cite{singh2013numerical,singh2016efficient}}
\end{problem}
Consider nonlinear doubly  singular boundary value problems
\begin{eqnarray*}
\left.
  \begin{array}{ll}
    \displaystyle (x^{\alpha}y'(x))'=  x^{\alpha+\beta-2}[\beta \left(\beta  x^{\beta }e^{2y}-e^y(\alpha +\beta -1)\right)],~~~~0<x<1,\vspace{0.15cm} \\
 \displaystyle y(0)=\ln \bigg(\frac{1}{4}\bigg),~~~~~~y(1)=\ln \bigg(\frac{1}{5}\bigg),~~~0<\alpha<1,~~  \beta>0.
\end{array}
\right\}
\end{eqnarray*}
Here, $p(x)=x^{\alpha}$, $q(x)=x^{\alpha+\beta-2}$, $f(x,y)=[\beta \left(\beta  x^{\beta }e^{2y}-e^y(\alpha +\beta -1)\right)]$ with $\delta_1=\ln \big(\frac{1}{4}\big)$, $\alpha_1=1$, $\beta_1=0$ and $\gamma_1=\ln \big(\frac{1}{5}\big)$. Its exact solution is $y(x)= \ln\big(\frac{1}{4+x^{\beta}}\big)$.
Applying OHAM  \eqref{sec2:eq27} with an initial guess $y_0(x)=\ln \big(\frac{1}{4}\big)$,  we get the approximate solution $\phi_M(x,c_0).$
Using the formula \eqref{sec2:eq30},  we obtain optimal values, $\hat{c}_0=[-0.970001;-0.970011]$ (for $\alpha=0.5,~\beta=1$) with $M=5,~10$, respectively.
We define absolute error as
\begin{align*}
E_{a}^{M}=|y(x)-\phi_M(x)|~~~~\hbox{and}~~~e_M^{a}=|y(x)-\psi_{M}(x)|,~~x\in[0,1].
\end{align*}
The numerical results of absolute errors and approximate solutions  are shown in Table \ref{tab1}. One can see that OHAM method provides better results compared with  ADMGF method.
\begin{table}[htbp]
\caption{Results of absolute  error and solutions  of Problem \ref{prob1} when  $\alpha=0.5,~\beta=1$}\label{tab1}
\centering
\vspace{-0.3cm}
\renewcommand{\arraystretch}{1.1}
\setlength{\tabcolsep}{0.11in}
\begin{tabular}{l|cc| cc|cc}
\hline
\cline{1-7}
$x$ & $e_{a}^{M}$  & $E_{a}^{5}$ & $e_{a}^{10}$    &   $E_{a}^{10}$    &   $\psi_{10}$ & $\phi_{10}$\\
\cline{1-7}
0.1	&	3.16E-08	&	2.53E-08	&	5.38E-13	&	4.28E-14	&	-1.410986974	&	-1.410986974	\\
0.2	&	4.70E-08	&	3.44E-08	&	8.19E-13	&	5.24E-14	&	-1.435084525	&	-1.435084525	\\
0.3	&	6.15E-08	&	3.86E-08	&	1.07E-12	&	5.55E-14	&	-1.458615023	&	-1.458615023	\\
0.4	&	7.62E-08	&	3.94E-08	&	1.32E-12	&	5.70E-14	&	-1.481604541	&	-1.481604541	\\
0.5	&	9.11E-08	&	3.73E-08	&	1.54E-12	&	5.92E-14	&	-1.504077397	&	-1.504077397	\\
0.6	&	1.04E-07	&	3.28E-08	&	1.71E-12	&	6.17E-14	&	-1.526056303	&	-1.526056303	\\
0.7	&	1.12E-07	&	2.63E-08	&	1.74E-12	&	6.21E-14	&	-1.547562509	&	-1.547562509	\\
0.8	&	1.08E-07	&	1.81E-08	&	1.53E-12	&	5.24E-14	&	-1.568615918	&	-1.568615918	\\
0.9	&	7.72E-08	&	8.07E-09	&	9.51E-13	&	2.90E-14	&	-1.589235205	&	-1.589235205	\\

\hline
\end{tabular}
\end{table}

\begin{problem}\label{prob2}
\rm{\textbf{Thermal Explosions} \cite{fogler1999elements,danish2012note} }
\end{problem}
Consider nonlinear singular boundary value problems
\begin{eqnarray*}
\left.
  \begin{array}{ll}
\displaystyle (x^2y'(x))' =\sigma^2 x^2 y^{n}(x),~~~~~0<x<1,\vspace{0.15cm}\\
   \displaystyle y'(0)=0,~~~~y(1)=1,
\end{array}
\right\}
\end{eqnarray*}
 Here, $p(x)=q(x)=x^{2}$, $f(x,y)=\sigma^2 y^{n}(x)$ with  $\alpha_2=1$, $\beta_2=0$ and $\gamma_2=1$. Applying OHAM  \eqref{sec2:eq27} with an initial guess $y_0(x)=1$, we get the approximate solution $\phi_M(x,c_0).$ Using the  formula \eqref{sec2:eq30},  we obtain optimal values $\hat{c}_0=[-0.8929193;-0.8712345]$, (for $n=1.5$,~$ \sigma=1$),  $\hat{c}_0=[-0.6890655; -0.6666666]$ (for $n=2, \sigma=1.5$) and $\hat{c}_0=[-0.5723102; -0.4809289]$ (for $n=2, \sigma=2$,) with iterations  $M=5,~10$, respectively. Since exact solution is not known so we define the absolute residual error as
 \begin{align*}
 E_{res }^{M}(x)=\big| (x^2\phi'_{M}(x))'-\sigma^2 x^2 \phi^{n}_{M}(x) \big|,~~e_{res }^{M}(x)=\big| (x^2\psi'_{M}(x))'- \sigma^2 x^2 \psi^{n}_{M}(x) \big|.
\end{align*}
The numerical results of the absolute  residual errors and the approximate solutions are shown in Tables \ref{tab2}, \ref{tab3} and  \ref{tab4}. One can observe that  the residual error not converging to zero with the increase in $\sigma$ and $n$ by ADMGF technique whereas the proposed method OHAM gives stable solution and converges to exact solution.

\begin{table}[htbp]
\caption{Numerical results of residual error and solutions  of Problem \ref{prob2} for  $n=1.5,~ \sigma=1$}\label{tab2}
\centering
\vspace{-0.3cm}
\renewcommand{\arraystretch}{1.1}
\setlength{\tabcolsep}{0.14in}
\begin{tabular}{l|cc| cc|cc}
\hline
\cline{1-7}
$x$ & $e_{res}^{5}$  & $E_{res}^{5}$ & $e_{res}^{10}$    &   $E_{res}^{10}$    &   $\psi_{10}$ & $\phi_{10}$\\
\cline{1-7}
0.1	&	5.20E-04	&	7.56E-06	&	3.75E-07	&	1.22E-13	&	0.859202	&	0.859202	\\
0.2	&	4.88E-04	&	7.15E-06	&	3.50E-07	&	4.96E-12	&	0.863188	&	0.863188	\\
0.3	&	4.38E-04	&	6.52E-06	&	3.10E-07	&	1.34E-11	&	0.869870    &	0.869870    \\
0.4	&	3.74E-04	&	5.72E-06	&	2.61E-07	&	2.56E-11	&	0.879303	&	0.879303	\\
0.5	&	3.02E-04	&	4.79E-06	&	2.07E-07	&	4.03E-11	&	0.891566	&	0.891566	\\
0.6	&	2.27E-04	&	3.66E-06	&	1.53E-07	&	5.18E-11	&	0.906766	&	0.906766	\\
0.7	&	1.56E-04	&	2.08E-06	&	1.03E-07	&	4.09E-11	&	0.925033	&	0.925033	\\
0.8	&	9.25E-05	&	5.69E-07	&	5.95E-08	&	4.75E-11	&	0.946527	&	0.946527	\\
0.9	&	3.98E-05	&	5.62E-06	&	2.50E-08	&	3.58E-10	&	0.971441	&	0.971441	\\
\hline
\end{tabular}
\end{table}
\begin{table}[htbp]
\caption{Results of residual error and solutions of Problem \ref{prob2} for $n=2, \sigma=1.5$ }\label{tab3}
\centering
\vspace{-0.3cm}
\renewcommand{\arraystretch}{1.0}
\setlength{\tabcolsep}{0.14in}
\begin{tabular}{l|cc| cc|cc}
\hline
\cline{1-7}
$x$ & $e_{res}^{5}$   & $E_{res}^{5}$ & $e_{res}^{10}$     &   $E_{res}^{10}$    &   $\psi_{10}$ & $\phi_{10}$\\
\cline{1-7}
0.1	&	3.69E-01	&	1.21E-03	&	1.34E-01	&	9.57E-08	&	0.759370	 &	0.750609	\\
0.2	&	3.46E-01	&	1.14E-03	&	1.25E-01	&	1.88E-07	&	0.765211	&	0.756965	\\
0.3	&	3.10E-01	&	1.03E-03	&	1.10E-01	&	3.33E-07	&	0.775144	&	0.767704	\\
0.4	&	2.64E-01	&	8.79E-04	&	9.13E-02	&	5.10E-07	&	0.789464	&	0.783048	\\
0.5	&	2.13E-01	&	6.84E-04	&	7.16E-02	&	6.56E-07	&	0.808584	&	0.803324	\\
0.6	&	1.60E-01	&	4.01E-04	&	5.24E-02	&	5.93E-07	&	0.833035	&	0.828980	\\
0.7	&	1.10E-01	&	7.74E-05	&	3.50E-02	&	1.88E-07	&	0.863480	&	0.860603	\\
0.8	&	6.53E-02	&	9.87E-04	&	2.03E-02	&	3.12E-06	&	0.900738	&	0.898953	\\
0.9	&	2.85E-02	&	2.82E-03	&	8.71E-03	&	1.21E-05	&	0.945823	&	0.945003	\\
\hline
\end{tabular}
\end{table}

\begin{table}[htbp]
\caption{Results of residual error and solutions of Problem \ref{prob2} for $n=2, \sigma=2$ }\label{tab4}
\centering
\vspace{-0.3cm}
\renewcommand{\arraystretch}{1.0}
\setlength{\tabcolsep}{0.15in}
\begin{tabular}{l|cc| cc|cc}
\hline
\cline{1-7}
$x$ & $e_{res}^{5}$   & $E_{res}^{5}$ & $e_{res}^{10}$     &   $E_{res}^{10}$    &   $\psi_{10}$ & $\phi_{10}$\\
\cline{1-7}
0.1	&	0.083	&	1.63E-04	&	1.140	&	9.69E-07	&	4.265570	&	0.641604	\\
0.2	&	0.316	&	6.14E-04	&	4.160	&	4.10E-06	&	4.060708	&	0.649873	\\
0.3	&	0.650	&	1.23E-03	&	7.990	&	9.92E-06	&	3.741832	&	0.663942	\\
0.4	&	1.013	&	1.87E-03	&	11.37	&	1.85E-05	&	3.339042	&	0.684262	\\
0.5	&	1.317	&	2.30E-03	&	13.26	&	2.67E-05	&	2.887879	&	0.711509	\\
0.6	&	1.482	&	2.17E-03	&	13.20	&	1.83E-05	&	2.424517	&	0.746636	\\
0.7	&	1.444   &	6.53E-04	&	11.30	&	6.65E-05	&	1.981583	&	0.790944	\\
0.8	&	1.176	&	4.45E-03	&	8.071	&	4.25E-04	&	1.585305	&	0.846200	\\
0.9	&	0.684	&	1.89E-02	&	4.130	&	1.67E-04	&	1.254310	&	0.914796\\
\hline
\end{tabular}
\end{table}

\begin{problem}\label{prob3}
\rm{\textbf{Distribution of Heat Sources in the Human Head} \cite{flesch1975distribution,gray1980distribution,duggan1986pointwise}}
\end{problem}
Consider singular boundary value problems \cite{singh2014efficient}
\begin{eqnarray*}
\left.
  \begin{array}{ll}
  \displaystyle -(x^2y'(x))' =\delta \;x^2 e^{-y(x)}~~~~0<x<1,\vspace{0.2cm }\\
  \displaystyle y'(0)=0,~~~\; \alpha_2y(1)+\beta_2 y'(1)=\gamma_2,\\
 \end{array}
\right\}
\end{eqnarray*}
Here,  $p(x)=q(x)=x^{2}$, $f(x,y)=\delta \;e^{-y(x)}$  and  $\delta=1$. Applying the OHAM  \eqref{sec2:eq27}
with an initial guess $y_0(x)=0$,  we get the approximate solution  $\phi_M(x,c_0).$
Using the formula \eqref{sec2:eq30},  we obtain optimal values, $\hat{c}_0=[-0.6842013;-0.6666463]$  (for $\alpha_2=\beta_2=1,\gamma_2=0$) and  $\hat{c}_0=[-0.7759493;-0.7701234]$ (for $\alpha_2=2,\beta_2=1,\gamma_2=0$)  with iterations  $M=5$ and $M=10$, respectively.   Since exact solution is not known so we define the absolute residual errors as
\begin{align*}
 E_{res }^{M}(x)=\big| (x^2\phi'_{M}(x))'+\delta x^2 e^{-\phi_{M}(x)} \big|,~~e_{res }^{M}(x)=\big| (x^2\psi'_{M}(x))'+ \delta x^2 e^{-\psi_{M}(x)}\big|.
\end{align*}
The numerical results are given in Tables \ref{tab5} and  \ref{tab6}.  We observe that  that the residual error not converging to zero by ADMGF method whereas the OHAM gives stable solution and converges to exact solution.
\begin{table}[htbp]
\caption{Results of residual error and solutions of Problem \ref{prob3} when  $\alpha_2=\beta_2=1,\gamma_2=0$}\label{tab5}
\centering
\vspace{-0.3cm}
\renewcommand{\arraystretch}{1.0}
\setlength{\tabcolsep}{0.13in}
\begin{tabular}{l|cc| cc|cc}
\hline
\cline{1-7}
$x$ & $e_{res}^{5}$   & $E_{res}^{5}$ & $e_{res}^{10}$    &   $E_{res}^{10}$    &   $\psi_{10}$ & $\phi_{10}$\\
\cline{1-7}
0.1	&	1.18E-01	&	2.05E-04	&	8.05E-02	&	2.41E-06	&	0.3442719	&	0.3663613	\\
0.2	&	1.15E-01	&	1.87E-04	&	7.87E-02	&	2.40E-06	&	0.3411278	&	0.3628931	\\
0.3	&	1.12E-01	&	1.57E-04	&	7.58E-02	&	2.38E-06	&	0.3358608	&	0.3570965	\\
0.4	&	1.07E-01	&	1.15E-04	&	7.19E-02	&	2.37E-06	&	0.3284310	&	0.3489474	\\
0.5	&	1.01E-01	&	6.13E-05	&	6.73E-02	&	2.36E-06	&	0.3187835	&	0.3384112	\\
0.6	&	9.44E-02	&	4.61E-06	&	6.20E-02	&	2.38E-06	&	0.3068490	&	0.3254426	\\
0.7	&	8.68E-02	&	8.39E-05	&	5.63E-02	&	2.44E-06	&	0.2925442	&	0.3099852	\\
0.8	&	7.87E-02	&	1.79E-04	&	5.05E-02	&	2.57E-06	&	0.2757729	&	0.2919703	\\
0.9	&	7.04E-02	&	2.94E-04	&	4.46E-02	&	2.79E-06	&	0.2564260	&	0.2713162	\\
\hline
\end{tabular}
\end{table}

\begin{table}[htbp]
\caption{Results of residual error and solutions of Problem \ref{prob3}  when  $\alpha_2=2,\beta_2=1,\gamma_2=0$}\label{tab6}
\centering
\renewcommand{\arraystretch}{1.0}
\setlength{\tabcolsep}{0.14in}
\begin{tabular}{l|cc| cc|cc}
\hline
\cline{1-7}
$x$ & $e_{res}^{5}$  & $E_{res}^{5}$& $e_{res}^{10}$     &   $E_{res}^{10}$   &   $\psi_{10}$ & $\phi_{10}$\\
\cline{1-7}
0.1	&	1.35E-02	&	6.11E-05	&	8.11E-04	&	9.12E-08	&	0.2686241	&	0.2687568	\\
0.2	&	1.31E-02	&	5.54E-05	&	7.77E-04	&	8.83E-08	&	0.2648035	&	0.2649327	\\
0.3	&	1.24E-02	&	4.62E-05	&	7.25E-04	&	8.39E-08	&	0.2584162	&	0.2585397	\\
0.4	&	1.14E-02	&	3.37E-05	&	6.58E-04	&	7.87E-08	&	0.2494321	&	0.2495481	\\
0.5	&	1.03E-02	&	1.82E-05	&	5.80E-04	&	7.35E-08	&	0.2378088	&	0.2379158	\\
0.6	&	9.08E-03	&	2.51E-07	&	4.98E-04	&	6.93E-08	&	0.2234908	&	0.2235876	\\
0.7	&	7.77E-03	&	2.23E-05	&	4.16E-04	&	6.72E-08	&	0.2064084	&	0.2064944	\\
0.8	&	6.46E-03	&	4.92E-05	&	3.38E-04	&	6.87E-08	&	0.1864771	&	0.1865520	\\
0.9	&	5.21E-03	&	8.36E-05	&	2.68E-04	&	7.65E-08	&	0.1635958	&	0.1636596	\\
\hline
\end{tabular}
\end{table}

\begin{problem}\label{prob4}
\rm{\textbf{Oxygen Diffusion in a Spherical Cell \cite{lin1976oxygen,mcelwain1978re,anderson1980complementary}}}
\end{problem}
Consider the following nonlinear singular boundary value problem:
\begin{eqnarray*}
\left.
  \begin{array}{ll}
  \displaystyle (x^2y'(x))' =n  x^2 \frac{y(x)}{y(x)+k},~~~0<x<1\vspace{0.2cm }\\
  \displaystyle y'(0)=0,~~~~5 y(1)+  y'(1)=5.
\end{array}
\right\}
\end{eqnarray*}
Here,  $p(x)=q(x)=x^{2}$, $f(x,y)=n\frac{y(x)}{y(x)+k}$ with  $n=0.76129$, $k=0.03119$, $\alpha_2=\gamma_2=5$ and $\beta_2=1$ as in \cite{wazwaz2011variational,singh2014efficient}. Applying the OHAM  \eqref{sec2:eq27} with an initial guess $u_0(x)=1$, we get the approximation to solution $\phi_M(x,c_0).$
Using the formula  \eqref{sec2:eq30}, we obtain optimal values $\hat{c}_0=[-1.045949; -1.010201]$ with  $M=5,10$, respectively. 
Since exact solution is not known so we define the absolute residual error as
 \begin{align*}
 E_{res }^{M}(x)=\bigg| (x^2\phi'_{M}(x))'-x^2 n  \frac{\phi_{M}(x)}{\phi_{M}(x)+k}\bigg|,~~e_{res }^{M}(x)=\bigg| (x^2\psi'_{M}(x))'-x^2 n  \frac{\psi_{M}(x)}{\psi_{M}(x)+k} \bigg|.
\end{align*}
The numerical results ate presented in   Table \ref{tab7}. From the numerical results we observe that OHAM give slightly better results compared to ADMGF method.

\begin{table}[htbp]
\caption{Numerical results of residual error and solutions of Problem \ref{prob4}}\label{tab7}
\centering
\vspace{-0.3cm}
\renewcommand{\arraystretch}{1.2}
\setlength{\tabcolsep}{0.13in}
\begin{tabular}{l|cc| cc|cc}
\hline
\cline{1-7}
$x$ & $e_{res}^{5}$  & $E_{res}^{5}$ & $e_{res}^{10}$     &   $E_{res}^{10}$    &   $\psi_{10}$ & $\phi_{10}$\\
\cline{1-7}
0.1	&	2.80E-06	&	7.95E-07	&	1.95E-10	&	1.04E-10	&	0.829706092	&	0.829706092	\\
0.2	&	2.49E-06	&	6.94E-07	&	1.50E-10	&	7.76E-11	&	0.833374734	&	0.833374734	\\
0.3	&	2.03E-06	&	5.54E-07	&	9.42E-11	&	4.61E-11	&	0.839489914	&	0.839489914	\\
0.4	&	1.50E-06	&	4.07E-07	&	4.56E-11	&	2.00E-11	&	0.848052785	&	0.848052785	\\
0.5	&	9.93E-07	&	2.83E-07	&	1.45E-11	&	4.59E-12	&	0.859064927	&	0.859064927	\\
0.6	&	5.66E-07	&	2.01E-07	&	6.60E-13	&	1.23E-12	&	0.872528320	&	0.872528320	\\
0.7	&	2.63E-07	&	1.59E-07	&	2.52E-12	&	1.82E-12	&	0.888445306	&	0.888445306	\\
0.8	&	8.70E-08	&	1.49E-07	&	1.77E-12	&	9.98E-13	&	0.906818548	&	0.906818548	\\
0.9	&	1.13E-08	&	1.52E-07	&	7.45E-13	&	4.01E-13	&	0.927650988	&	0.927650988	\\
\hline
\end{tabular}
\end{table}

\begin{problem}\label{prob5}
\rm{\textbf{Perturbed Second Kind Lane-Emden  Equation \cite{reger2013lane} }}
\end{problem}
Consider the following perturbed singular boundary value problem:
\begin{eqnarray*}
\left.
  \begin{array}{ll}
 \displaystyle -(x^{\alpha}y'(x))' =\delta \;x^{\alpha} \displaystyle  \exp\bigg({\frac{y(x)}{1+\epsilon \; y(x)}}\bigg),~~0<x<1 \vspace{0.2cm }\\
 \displaystyle y'(0)=0,~~~2\;y(1)+y'(1)=0.
\end{array}
\right\}
 \end{eqnarray*}
Here,  $p(x)=q(x)=x^{\alpha}$, $f(x,y)=\delta  \exp\big({\frac{y(x)}{1+\epsilon \; y(x)}}\big)$  with $\alpha_2=2$,  $\beta_2=1$ and  $\gamma_2=0$  as in \cite{singh2017optimal}.
Applying the OHAM  \eqref{sec2:eq27} with an initial guess $u_0(x)=0$ , we get the approximation $\phi_M(x,c_0).$ Using the formula defined by \eqref{sec2:eq30}, we obtain optimal values   $\hat{c}_0=[-0.432512;-0.381111;-0.284943]$ (for $\alpha=1$) and   $\hat{c}_0$=$[-0.608235;-0.471209;-0.381567]$ (for $\alpha=2$) with ($\epsilon=5,10,15$), $M=10$, respectively. 
Since exact solution is not known so we define the absolute residual errors as
 \begin{align*}
 &E_{res }^{M}(x)=\bigg| (x^{\alpha}\phi'_{M}(x))'+x^{\alpha} \delta \exp\bigg(\frac{\phi_{M}(x)}{1+\epsilon \phi_{M}(x)}\bigg)\bigg|,\\
&e_{res }^{M}(x)=\bigg| (x^{\alpha}\psi'_{M}(x))'+x^{\alpha} \delta \exp\bigg(\frac{\psi_{M}(x)}{1+\epsilon \psi_{M}(x)}\bigg)\bigg|.
\end{align*}
In Tables \ref{tab8} and \ref{tab9},   we consider the influence of $\epsilon$ on the  residual error for ($\epsilon=5,10,15$) with  $\alpha=2$ and $M=10$. In each cases, we observe that the residual error not converging to zero with an increases in $\epsilon$ by ADMGF technique whereas the  OHAM gives stable solution and converges to exact solution.

\begin{table}[htbp]
\caption{Numerical results of absolute residual error Problem \ref{prob5} when  $\alpha=1,~\delta=1$}\label{tab8}
\centering
\vspace{-0.3cm}
\renewcommand{\arraystretch}{1.0}
\setlength{\tabcolsep}{0.14in}
\begin{tabular}{l|cc| cc|cc}
\hline
&&$\epsilon=5$& &$\epsilon=10$& &$\epsilon=15$ \\
\cline{1-7}
$x$ & $e_{res}^{10}$ &  $E_{res}^{10}$ &    $e_{res}^{10}$    &   $E_{res}^{10}$   &  $e_{res}^{10}$ & $E_{res}^{10}$\\
\cline{1-7}
0.1	&	96.820  &	3.79E-04	&	1190.90	&	3.95E-05	&	173751.94	&	1.73E-03	\\
0.2	&	200.650	&	4.41E-04	&	2334.05	&	2.93E-04	&	299967.18	&	1.11E-03	\\
0.3	&	312.307	&	2.20E-05	&	3295.97	&	7.10E-04	&	350180.75	&	2.71E-03	\\
0.4	&	423.542	&	1.03E-03	&	3858.23	&	8.85E-04	&	324708.56	&	8.77E-03	\\
0.5	&	519.051	&	2.45E-03	&	3823.73	&	2.47E-04	&	248739.93	&	1.52E-02	\\
0.6	&	582.390	&	4.01E-03	&	3159.91	&	1.51E-03	&	157883.64	&	2.08E-02	\\
0.7	&	602.770	&	5.44E-03	&	2068.78	&	4.10E-03	&	081587.42	&	2.52E-02	\\
0.8	&	579.289	&	6.62E-03	&	0914.41	&	6.73E-03	&	033136.89	&	2.90E-02	\\
0.9	&	520.645	&	7.53E-03	&	0040.79	&	8.66E-03	&	010098.14	&	3.26E-02	\\

\hline
\end{tabular}
\end{table}

\begin{table}[htbp]
\caption{Numerical results of absolute residual error Problem \ref{prob5} when  $\alpha=2,~\delta=1$}\label{tab9}
\centering
\vspace{-0.3cm}
\renewcommand{\arraystretch}{1.0}
\setlength{\tabcolsep}{0.14in}
\begin{tabular}{l|cc| cc|cc}
\hline
&&$\epsilon=5$& &$\epsilon=10$& &$\epsilon=15$ \\
\cline{1-7}
$x$ & $e_{res}^{10}$ &  $E_{res}^{10}$ &    $e_{res}^{10}$    &   $E_{res}^{10}$   &  $e_{res}^{10}$ & $E_{res}^{10}$\\
\cline{1-7}
0.1	&	380.762	&	2.28E-03		&	1279.93	&	1.06E-03	&	29880.52	&	4.02E-03	\\
0.2	&	332.801	&	1.83E-03		&	1129.84	&	4.31E-04	&	25360.17	&	6.66E-04	\\
0.3	&	264.086	&	1.09E-03		&	0917.57	&	3.92E-04	&	19118.32	&	3.28E-03	\\
0.4	&	187.844	&	1.19E-04		&	0686.12	&	1.18E-03	&	12594.80	&	6.41E-03	\\
0.5	&	116.761	&	9.99E-04		&	0474.09	&	1.77E-03	&	07033.77	&	8.11E-03	\\
0.6	&	059.804	&	2.13E-03		&	0305.58	&	2.12E-03	&	03128.11	&	8.61E-03	\\
0.7	&	020.680	&	3.14E-03		&	0187.06	&	2.24E-03	&	00928.97	&	8.56E-03	\\
0.8	&	001.774	&	3.97E-03		&	0111.44	&	2.21E-03	&	00018.88	&	8.44E-03	\\
0.9	&	011.741	&	4.57E-03		&	0065.80	&	2.08E-03	&	00178.08	&	8.39E-03	\\
\hline
\end{tabular}
\end{table}

\section{Conclusions}
In this paper, we have examined the  doubly singular  boundary value problems  with Dirichlet/Neumann boundary conditions at $x=0$ and Robin type boundary conditions at $x=1$,   arising in  the reaction-diffusion process in a porous spherical catalyst \cite{fogler1999elements}, oxygen diffusion in a spherical cell \cite{lin1976oxygen}, heat sources in the human head \cite{duggan1986pointwise} and the perturbed second kind Lane-Emden  equation is used in modelling a thermal explosion  \cite{reger2013lane}. Due to the presence of singularity at $x=0$ as well as discontinuity of $q(x)$ at $x=0$, these problems pose difficulties in obtaining their solutions. In this paper, a new formulation of the singular  boundary value problems has been presented. To overcome the singular behavior at the origin, with the help of Green's function theory the problem  has been transformed into an equivalent Fredholm integral equation. Then  the optimal homotopy analysis method is applied to solve integral form of problem. The optimal control-convergence parameter involved in the components of the series solution has been obtained by minimizing the squared residual error equation. For speed up the calculations, the discrete averaged residual error has been used to obtain optimal value of the  adjustable parameter $c_0$ to control the convergence of solution. Numerical results obtained by OHAM are better than the results obtained by the ADMGF \cite{singh2014efficient} and are in good agreement with exact solutions, as shown in Tables \ref{tab1}-\ref{tab9}. Unlike ADMGF \cite{singh2014efficient}, the  OHAM  always gives fast convergent series solution as shown in Tables. Convergence analysis and  error estimate of the proposed method have been discussed.
The proposed method has successfully applied to the  perturbed second kind Lane-Emden  Equation \cite{reger2013lane} whereas other method fails to give covergenct series solution as shown in  Tables \ref{tab8} and \ref{tab9}.


\end{document}